\documentclass[a4paper,12pt]{article}

\usepackage[active]{srcltx}
\usepackage{verbatim}
\usepackage{amsmath}
\usepackage{amssymb}
\usepackage{url}
\usepackage{cite}
\usepackage{enumerate}

\usepackage[margin=2cm,nohead]{geometry}

\newtheorem{theorem}{Theorem}

\newtheorem{proposition}{Proposition}

\newtheorem{definition}{Definition}

\newenvironment{proof}{{\noindent\bf Proof.}}{\hfill$\Box$\\}

\DeclareMathOperator{\MixICP}{MixICP}
\DeclareMathOperator{\MixCP}{MixCP}
\DeclareMathOperator{\ICP}{ICP}
\DeclareMathOperator{\SMixICP}{SOL-MixICP}
\DeclareMathOperator{\SMixCP}{SOL-MixCP}
\DeclareMathOperator{\SICP}{SOL-ICP}
\DeclareMathOperator{\SLCP}{SOL-LCP}
\DeclareMathOperator{\SCP}{SOL-CP}
\DeclareMathOperator{\LCP}{LCP}
\DeclareMathOperator{\CP}{CP}
\DeclareMathOperator{\C}{\mathcal C}

\DeclareMathOperator{\diag}{diag}

\newcommand{\lng}{\langle}
\newcommand{\rng}{\rangle}
\newcommand{\lf}{\left}
\newcommand{\rg}{\right}
\newcommand{\R}{\mathbb R}

\newcommand{\tp}{^\top}

\newcommand{\bs}{\left(\begin{smallmatrix}}
\newcommand{\es}{\end{smallmatrix}\right)}

\begin{document}

\title{Linear complementarity problems on extended second order cones
}

\author{S. Z. N\'emeth\\School of Mathematics, University of Birmingham\\Watson Building, Edgbaston\\Birmingham B15 2TT, United Kingdom\\email: s.nemeth@bham.ac.uk
\and L. Xiao\\School of Mathematics, University of Birmingham\\Watson Building, Edgbaston\\Birmingham B15 2TT, United Kingdom\\email: Lxx490@bham.ac.uk}
\maketitle

\begin{abstract}
    In this paper, we study the linear complementarity problems on extended second order cones. We convert a linear complementarity problem on an
    extended second order cone into a mixed complementarity problem on the non-negative orthant. We state necessary and sufficient conditions for
    a point to be a solution of the converted problem. We also present solution strategies for this problem, such as the Newton method and
    Levenberg-Marquardt algorithm. Finally, we present some numerical examples.
    \vspace{1mm}
    \noindent

    {\bf Keywords:} Complementarity Problem, Extended Second Order Cone, Conic Optimization
    \vspace{1mm}
    \noindent

    {\bf 2010 AMS Subject Classification:} 90C33, 90C25
\end{abstract}

\section{Introduction}

Although research in cone complementarity problems (see the definition in the beginning of the Preliminaries) goes back a few decades only, the underlying concept of complementarity is much older, being
firstly introduced by Karush in 1939\cite{karush1939minima}. It seems that the concept of complementarity problems was first considered
by Dantzig and Cottle in a technical report \cite{dantzig1963positive}, for the non-negative orthant. In 1968, Cottle and
Dantzig\cite{cottle1968complementary}
restated the linear programming problem, the quadratic programming problem and the bimatrix game problem as a complementarity problem, which
inspired the research in this field (see \cite{mangasarian1976linear, garcia1973some, borwein1989linear,alizadeh2003second,FacchineiPang2003}).

The complementarity problem is a cross-cutting area of research which has a wide range of
applications in economics, finance and other fields. Earlier works in cone complementarity problems present the theory for a general cone and the
practical applications merely for the non-negative orthant only (similarly to the books \cite{FacchineiPang2003, MR2503647}). These are related to
equilibrium in economics, engineering, physics, finance and traffic. Examples in economics are
Walrasian price equilibrium models, price oligopoly models, Nash-Cournot production/distribution models, models of invariant capital stock,
Markov perfect equilibria, models of decentralised economy and perfect competition equilibrium, models with individual markets of production
factors. Engineering and physics applications are frictional contact problems, elastoplastic structural analysis and nonlinear
obstacle problems. An example in finance is the discretisation of the differential complementarity formulation of the Black-Scholes
models for the American options \cite{jaillet1990variational}. An application to congested traffic networks is the prediction of steady-state traffic flows. In the
recent years several applications have emerged where the complementarity problems are defined by cones essentially different from the non-negative
orthant such as positive semidefinite cones, second order cones and direct product of these cones (for mixed complementarity problems containing
linear subspaces as well). Recent applications of second order cone complementarity problems are in elastoplasticity \cite{MR2925039,MR3010551},
robust game theory \cite{MR2568432,MR2522815} and robotics \cite{MR2377478}. All these applications come from the Karush-Kuhn-Tucker conditions of
second order conic optimization problems.

N\'emeth and Zhang extended the concept of second order cone in \cite{nemeth2015extended} to the extended second order cone. Their extension
seems the most natural extension of second order cones. Sznajder showed that the extended second order cones in \cite{nemeth2015extended} are
irreducible cones (i.e., they cannot be written as a direct product of simpler cones) and calculated the Lyapunov rank of these cones
\cite{RS2016}. The applications of second order cones and the elegant way of extending them suggest that the extended second order cones will be
important from both theoretical and practical point of view. Although conic optimization problems with respect to extended second order cones can
be reformulated as conic optimization problems with respect to second order cones, we expect that for several such problems using the particular
inner structure of the second order cones provides a more efficient way of solving them than solving the transformed conic optimization problem
with respect to second order cones. Indeed, such a particular problem is the projection onto an extended second order cone which is much easier
to solve directly than solving the reformulated second order conic optimization problem \cite{FN2016}.

Until now the extended second order cones of N\'emeth and Zhang were used as a working tool only for finding the solutions of mixed
complementarity problems on general cones \cite{nemeth2015extended} and variational inequalities for cylinders whose base is a general convex set
\cite{NZ2016a}.
The applications above for second order cones show the importance of these cones and motivates considering conic optimization and complementarity
problems on extended second order cones. As another motivation we suggest the application to mean variance portfolio optimization
problems \cite{markowitz1952portfolio,roy1952safety} described in Section 3.

The paper is structured as follows: In Section 2, we illustrate the main terminology and definitions used in this paper. In Section 3 we present an application of extended second order cones to portfolio optimization  problems. In Section 4, we
introduce the notion of mixed implicit complementarity problem as an implicit complementarity problem on the direct product of a cone and a Euclidean space.
In Section 5, we reformulate the linear complementarity problem as a mixed (implicit, mixed implicit) complementarity problem on the non-negative
orthant (MixCP).

Our main result is Theorem \ref{Main_thm}, which discusses the connections between an ESOCLCP  and mixed (implicit, mixed implicit)
complementarity problems. In particular, under some mild conditions, given the definition of Fischer-Burmeister (FB) regularity and of the
stationarity of a point, we prove in Theorem \ref{SP} that a point can be the solution of a mixed complementarity problem if it satisfies
specific conditions related to FB regularity and stationarity (Theorem \ref{SP}). This theorem can be used to determine whether a point is a
solution of a mixed complementarity problem converted from ESOCLCP. In Section 6, we use Newton's method and Levenberg-Marquardt algorithm to find
the solution for the aforementioned MixCP. In Section 7, we provide an example of a linear complementarity problem on an extended second
order cone. Based on the above, we convert this linear complementarity problem into a mixed complementarity problem on the non-negative orthant,
and use the aforementioned algorithms to solve it. A solution of this mixed complementarity problem will provide a solution of the
corresponding ESOCLCP.

As a first step, in this paper, we study the linear complementarity problems on extended second order cones (ESOCLCP). We find that an ESOCLCP can
be transformed to a mixed (implicit, mixed implicit) complementarity problem on the non-negative orthant. We will give the conditions for which a
point is a solution of the reformulated MixCP problem, and in this way we provide conditions for a point to be a solution of ESOCLCP.

\section{Preliminaries}

Let $m$ be a positive integer and $F:\R^m\to\R^m$ be a mapping and $y=F(x)$. The definition of the classical complementary problem
\cite{FacchineiPangI2003}
\begin{equation*}
    x \geq 0,\quad y\geq 0, \quad and \quad \langle x, y \rangle = 0,
\end{equation*}
where $\ge$ denotes the componentwise order induced by the non-negative orthant and $\lng\cdot,\cdot\rng$ is the canonical scalar product in
$\R^m$, was later extended to more general cones $K$, as follows:
\begin{equation*}
    x\in K,\quad y\in K^*, \quad and \quad \langle x, y \rangle = 0,
\end{equation*}
where $K^*$ is the dual of $K$ \cite{MR0321540}.

Let $k,\ell,\hat\ell$ be non-negative integers such that $m=k+\ell$.

Recall the definitions of the mutually dual extended second order cone $L(k,\ell)$ and $M(k,\ell)$ in $\mathbb{R}^m\equiv\R^k\times\R^\ell$:
\begin{equation}\label{elc}
L(k,\ell) = \{(x,u) \in \mathbb{R}^k\times \mathbb{R}^\ell : x \geq \|u\|e\},
\end{equation}
\begin{equation}\label{delc}
M(k,\ell) = \{(x,u) \in \mathbb{R}^k\times \mathbb{R}^\ell : e\tp x\geq \|u\|,x\ge0\},
\end{equation}
where  $e=(1, \dots, 1)\tp \in \mathbb{R}^k $. If there is no ambiguity about the dimensions, then we simply denote $L(k,\ell)$ and $M(k,\ell)$ by
$L$ and $M$, respectively.

Denote by $\lng\cdot,\cdot\rng$ the canonical scalar product in $\R^m$ and by $\|\cdot\|$ the corresponding Euclidean norm. The notation
$x\perp y$ means that $\lng x,y\rng=0$, where $x,y\in\R^m$.

Let $K\subset\R^m$ be a nonempty closed convex cone and $K^*$ its dual.

\begin{definition}
The set \[\C(K):=\left\{(x,y)\in K\times K^*:x\perp y\right\}\] is called the \emph{complementarity set} of $K$.
\end{definition}

\begin{definition}\label{cp-def}
	Let $F:\R^m\to\R^m$. Then, the complementarity problem
	$\CP(F,K)$ is defined by:

	\begin{equation}\label{cp-eq}
		\CP(F,K):\textrm{ }(x,F(x))\in\C(K).
	\end{equation}
	The solution set of $\CP(F,K)$ is denoted by $\SCP(F,K)$:
    \begin{equation*}
        \SCP(F,K) =  \{x\in \R^m: (x, F(x)) \in \C(K)\}.
    \end{equation*}
	If $T$ is a matrix, $r\in\R^m$ and $F$ is defined by $F(x)=Tx+r$, then $\CP(F,K)$ is denoted by
	$\LCP(T,r,K)$ and is called \emph{linear
	complementarity problem}. The solution set of $\LCP(T,r,K)$ is denoted by $\SLCP(T,r,K)$.
\end{definition}

\begin{definition}\label{icp-def}
	Let $G,F:\R^m\to\R^m$. Then, the implicit complementarity problem
	$\ICP(F,G,K)$ is defined by
	\begin{equation}\label{icp-eq}
		\ICP(F,G,K):\textrm{ }(G(x),F(x))\in\C(K).
	\end{equation}
	The solution set of $\ICP(F,G,K)$ is denoted by $\SICP(F,G,K)$:
    \begin{equation*}
        \SICP(F,G,K) = \{x\in \R^m:  (G(x), F(x)) \in \C(K)\}.
    \end{equation*}
\end{definition}

Let $m,k,\ell$ be non-negative integers such that $m=k+\ell$, $\Lambda\in\R^k$ be a nonempty closed convex cone and
$K=\Lambda\times\R^\ell$. Denote by $\Lambda^*$ the dual of $\Lambda$ in $\R^k$ and by $K^*$ the dual of $K$ in
$\R^k\times\R^\ell$. It is easy to check that $K^*=\Lambda^*\times\{0\}$.

\begin{definition}\label{mixcp-def}
	Consider the mappings $F_1:\R^k\times\R^\ell\to \R^k$ and $F_2:\R^k\times\R^\ell\to \R^{\hat\ell}$. The mixed
	complementarity problem
	$\MixCP(F_1,F_2,\Lambda)$ is defined by
	\begin{gather}\label{mixcp-eq}
		\MixCP(F_1,F_2,\Lambda):\left\{
		\begin{array}{l}
			F_2(x,u)=0\\\\
			(x,F_1(x,u))\in\C(\Lambda).
		\end{array}
		\right.
	\end{gather}
	The solution set of $\MixCP(F_1,F_2,\Lambda)$ is denoted by $\SMixCP(F_1,F_2,\Lambda)$:
    \begin{align*}
        \SMixCP(F_1,F_2,\Lambda) =\{x\in \R^m: F_2(x,u)=0,(x, F_1(x,u)) \in \C(\Lambda)\}.
    \end{align*}
\end{definition}

\begin{definition}\label{s0_matrix} \textrm{\cite[Definition 3.7.29]{FacchineiPang2003}}
    A matrix $\Pi \in \R^{n\times n}$ is said to be an $S_0$ matrix if the system of linear inequalities
    \begin{equation*}
        \Pi x \ge 0,\quad 0\ne x\ge 0
    \end{equation*}
    has a solution.
\end{definition}

The proof of our next result follows immediately from $K^*= \Lambda^*\times\{0\}$ and the definitions of $\CP(F,K)$ and $\MixCP(F_1,F_2,\Lambda)$.

\begin{proposition}
	Consider the mappings \[F_1:\R^k\times\R^\ell\to \R^k,\quad F_2:\R^k\times\R^\ell\to \R^\ell.\] Define the
	mapping
	\[F:\R^k\times\R^\ell\to\R^k\times\R^\ell\] by \[F(x,u)=(F_1(x,u),F_2(x,u)).\] Then,
	\[(x,u)\in\SCP(F,K)\iff (x,u)\in\SMixCP(F_1,F_2,\Lambda).\]
\end{proposition}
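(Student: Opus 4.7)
The plan is to prove the equivalence by directly unpacking both definitions and using the identity $K^* = \Lambda^* \times \{0\}$, which the preamble already notes is easy to verify from $K = \Lambda \times \R^\ell$. Since this is a biconditional between two set memberships defined by conjunctions of conditions, I will simply show that the system of conditions defining the right-hand side of $(x,u)\in\SCP(F,K)$ is, term by term, the same as the system defining $(x,u)\in\SMixCP(F_1,F_2,\Lambda)$.

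First I would expand $(x,u)\in\SCP(F,K)$ using Definition~\ref{cp-def}: this means $((x,u),F(x,u))\in\C(K)$, i.e.\ $(x,u)\in K$, $F(x,u)\in K^*$, and $\langle (x,u),F(x,u)\rangle=0$. Substituting $K=\Lambda\times\R^\ell$, the first condition becomes $x\in\Lambda$ (the component $u\in\R^\ell$ is vacuous). Substituting $K^*=\Lambda^*\times\{0\}$ and $F=(F_1,F_2)$, the second condition becomes $F_1(x,u)\in\Lambda^*$ and $F_2(x,u)=0$. For the orthogonality condition, write
\[
\langle (x,u),F(x,u)\rangle=\langle x,F_1(x,u)\rangle+\langle u,F_2(x,u)\rangle,
\]
and observe that once $F_2(x,u)=0$ has been enforced, this reduces to $\langle x,F_1(x,u)\rangle=0$.

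Collecting these conditions, I arrive at the system $F_2(x,u)=0$, $x\in\Lambda$, $F_1(x,u)\in\Lambda^*$, $\langle x,F_1(x,u)\rangle=0$, which by Definition~\ref{cp-def} of the complementarity set $\C(\Lambda)$ is precisely $F_2(x,u)=0$ together with $(x,F_1(x,u))\in\C(\Lambda)$. By Definition~\ref{mixcp-def} this is exactly the condition $(x,u)\in\SMixCP(F_1,F_2,\Lambda)$. All implications above are reversible, so the equivalence follows.

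There is essentially no obstacle here: the argument is a routine bookkeeping of definitions, and the only mildly delicate point is remembering that the $\R^\ell$-part of the inner product disappears \emph{because} $F_2$ must vanish, so we do not need $u$ to lie in any restricted set. I would keep the write-up to a few lines, presenting it as a single chain of equivalences.
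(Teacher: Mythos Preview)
Your argument is correct and is exactly the approach the paper takes: the paper simply states that the result ``follows immediately from $K^*=\Lambda^*\times\{0\}$ and the definitions of $\CP(F,K)$ and $\MixCP(F_1,F_2,\Lambda)$,'' and your write-up is precisely that unpacking. The only cosmetic issue is that the complementarity set $\C(\Lambda)$ is introduced in the definition preceding Definition~\ref{cp-def}, not in Definition~\ref{cp-def} itself.
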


\begin{definition}\label{Schur_comp}\textrm{\cite[Schur complement]{zhang2006schur}}
	The notation of the Schur complement for a matrix $\Pi=\left(\begin{smallmatrix} P & Q\\R & S\end{smallmatrix}\right)$, with $P$
		nonsingular, is
    \begin{equation*}
        \left( \Pi/P \right) = S - RP^{-1}Q.
    \end{equation*}
\end{definition}

\begin{definition}\label{Lipschitz}\textrm{\cite[Definition 4.6.2]{sohrab2003basic}}
    \item[(i)]Let $I$ be an open subset with $ I \subset \R^m$ and $ f: I \rightarrow \R^m$. We say that $f$ is Lipschitz function, if there is a
	    constant $\lambda >0$ such that
    \begin{equation}
        \| f(x) - f(x') \| \le \lambda\|x - x'\| \quad \forall x, x' \in I.
    \end{equation}
    \item[(ii)] We say that $f$ is locally Lipschitz if for every $ x \in I$, there exists $ \varepsilon > 0 $ such that $f$ is Lipschitz on
	    $I\cap B_{\varepsilon}(x)$, where $B_{\varepsilon}(x)=\{y\in\R^m:\|y-x\|\le\varepsilon\}$.
\end{definition}

\section{An Application of Extended Second Order Cones to Portfolio Optimisation Problems}

Consider the following Portfolio Optimisation Problem:




\[
\min_{w}\lf\{w^{\top}\Sigma w:\textrm{ }r^{\top}w \ge R,\textrm{ }e\tp w=1\rg\},
\]

where $\Sigma\in\mathbb R^{n\times n}$ is the covariance matrix, $e=(1, \dots, 1)^{\top} \in \mathbb{R}^n$, $w\in\R^n$ is the weight of asset
allocation for the portfolio and $R$ is the required return of the portfolio.

In order to guarantee the diversified allocation of the fund into different assets in the market, a new constraint can be reasonably introduced:
\(\|w\| \le \xi,\)
where $ \xi$ is the limitation of the concentration of the fund allocation. If short selling is allowed, then $w$ can be less than zero. The introduction of this constraint can guarantee that the fund will be allocated into few assets only.

Since the covariance matrix $\Sigma$ can be decomposed into $\Sigma = U^{\top}U$, the problem can be rewritten as
\[
\min_{w,\xi,y}\lf\{y:\textrm{ }r^{\top}w \ge R,\textrm{ }\|Uw\|\le y,\textrm{ }\|w\| \le \xi,\textrm{ }e\tp w = 1\rg\}.
\]

The constraint $\|Uw\|\le y$ is a relaxation of the constraint $\|U\|\|w\|\le y$,
where $\|U\|=\max_{\|x\|\le 1}{\|Ux\|}$. The strengthened problem will become:
\[
\min_{w,\xi,y}\lf\{y:\textrm{ }r^{\top}w \ge R,\textrm{ }\|w\|e \le \left(\xi, \dfrac{y}{\|U\|}\right)^{\top},\textrm{ }e\tp w=1\rg\}.
\]
The minimal value of the objective of the original problem is at most as large as the minimal value of the objective for this latter problem.
The second constraint of the latter portfolio optimisation problem means that the point \(\left(\xi, y/\|U\|,w\right)^{\top}\) belongs
to the extended second order cone $L(2,n)$. Hence, the strenghtened problem is a conic optimisation problem with respect to an extended second order cone.

\section{Mixed Implicit Complementarity Problems}

Let $m,k,\ell,\hat\ell$ be non-negative integers such that $m=k+\ell$, $\Lambda\in\R^k$ be a nonempty, closed, convex cone and $K=\Lambda\times\R^\ell$. Denote by $\Lambda^*$
the dual of $\Lambda$ in $\R^k$ and by $K^*$ the dual of $K$ in $\R^k\times\R^\ell$.

\begin{definition}\label{mixicp-def}
	Consider the mappings \[F_1,G_1:\R^k\times\R^\ell\to \R^k,\quad F_2:\R^k\times\R^\ell\to \R^{\hat\ell}.\] The mixed implicit
	complementarity problem
	$\MixICP(F_1,F_2,G_1,\Lambda)$ is defined by
	\begin{gather}\label{mixicp-eq}
		\MixICP(F_1,F_2,G_1,\Lambda):\left\{
		\begin{array}{l}
			F_2(x,u)=0\\\\
			(G_1(x,u),F_1(x,u))\in\C(\Lambda).
		\end{array}
		\right.
	\end{gather}
	The solution set of the mixed complementarity problem $\MixICP(F_1,F_2,G_1,\Lambda)$ is denoted by $\SMixICP(F_1,F_2,G_1,\Lambda)$:
    \begin{align*}
        \SMixICP&( F_1,F_2,G_1,\Lambda)  = \\
        &\{x\in \R^m: F_2(x,u)=0,(G_1(x,u), F_1(x,u)) \in \C(\Lambda)\}.
    \end{align*}
\end{definition}

The proof of our next result follows immediately from $K^*=\Lambda^*\times \{0\}$ and the definitions of $\ICP(F,G,K)$ and $\MixICP(F_1,F_2,G_1,\Lambda)$.

\begin{proposition}
	Consider the mappings \(F_1,G_1:\R^k\times\R^\ell\to \R^k,\) \(F_2,G_2:\R^k\times\R^\ell\to \R^\ell.\) Define the mappings
	\(F,G:\R^k\times\R^\ell\to\R^k\times\R^\ell\) by \(F(x,u)=(F_1(x,u),F_2(x,u)),\) \(G(x,u)=(G_1(x,u),G_2(x,u)),\) respectively.
	Then,
	\[(x,u)\in\SICP(F,G,K)\iff (x,u)\in\SMixICP(F_1,F_2,G_1,\Lambda).\]
\end{proposition}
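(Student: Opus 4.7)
The plan is to verify the biconditional by unfolding both sides into their primitive component-wise conditions and observing that they reduce to the same system. Since everything in $K=\Lambda\times\R^\ell$ and $K^*=\Lambda^*\times\{0\}$ splits cleanly between the two factors, there is no real geometric content beyond bookkeeping.

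First I would start from the left-hand side: $(x,u)\in\SICP(F,G,K)$ means, by Definition \ref{icp-def}, that $G(x,u)\in K$, $F(x,u)\in K^*$ and $\langle G(x,u),F(x,u)\rangle=0$. Writing $G(x,u)=(G_1(x,u),G_2(x,u))$ and $F(x,u)=(F_1(x,u),F_2(x,u))$ and using $K=\Lambda\times\R^\ell$, the membership $G(x,u)\in K$ is equivalent to $G_1(x,u)\in\Lambda$, while the $G_2$ part imposes no constraint. Likewise, $K^*=\Lambda^*\times\{0\}$ gives that $F(x,u)\in K^*$ is equivalent to $F_1(x,u)\in\Lambda^*$ together with $F_2(x,u)=0$.

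Next I would simplify the orthogonality condition. The canonical inner product on $\R^k\times\R^\ell$ decomposes as
\[
\langle G(x,u),F(x,u)\rangle=\langle G_1(x,u),F_1(x,u)\rangle+\langle G_2(x,u),F_2(x,u)\rangle,
\]
and once we know $F_2(x,u)=0$ the second summand vanishes identically, independently of $G_2$. Therefore the full system collapses to $F_2(x,u)=0$, $G_1(x,u)\in\Lambda$, $F_1(x,u)\in\Lambda^*$, and $\langle G_1(x,u),F_1(x,u)\rangle=0$, which is precisely the statement that $F_2(x,u)=0$ and $(G_1(x,u),F_1(x,u))\in\C(\Lambda)$; by Definition \ref{mixicp-def} this is exactly $(x,u)\in\SMixICP(F_1,F_2,G_1,\Lambda)$. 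Running the implications in the reverse direction uses the same equivalences, so both directions are obtained simultaneously.

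There is no real obstacle here; the only thing worth flagging is the disappearance of $G_2$ from the conditions, which is what justifies the fact that the MixICP formulation does not involve $G_2$ at all. I would write the proof as a single short display of equivalences, citing Definitions \ref{icp-def} and \ref{mixicp-def} and the identification $K^*=\Lambda^*\times\{0\}$ mentioned just before the statement.
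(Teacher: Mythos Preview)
Your proposal is correct and matches the paper's approach exactly: the paper does not write out a detailed argument but simply states that the result follows immediately from $K^*=\Lambda^*\times\{0\}$ and the definitions of $\ICP(F,G,K)$ and $\MixICP(F_1,F_2,G_1,\Lambda)$, which is precisely the chain of equivalences you spell out.
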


\section{Main Results}
The linear complementarity problem is the dual problem of a quadratic optimisation problem, which has a wide range of applications in various
areas. One of the most famous application is the portfolio optimisation problem first introduced by Markowitz \cite{markowitz1952portfolio}; see
the application of the extended second order cone to this problem presented in the Introduction.

\begin{proposition}\label{cs-esoc-prop}
	Let $x,y\in\R^k$ and $u,v\in\R^\ell\setminus\{0\}$.
	\begin{enumerate}
		\item[(i)] $(x,0,y,v)\in\C(L)$ if and only if $e\tp y\ge\|v\|$ and $(x,y)\in\C(\R^k_+)$.
		\item[(ii)] $(x,u,y,0)\in\C(L)$ if and only if $x\ge\|u\|$ and $(x,y)\in\C(\R^k_+)$.
		\item[(iii)] $(x,u,y,v):=((x,u),(y,v))\in\C(L)$ if and only if there exists a $\lambda>0$ such that $v=-\lambda u$,
			$e\tp y=\|v\|$ and $(x-\|u\|e,y)\in C(\R^k_+)$.
	\end{enumerate}
\end{proposition}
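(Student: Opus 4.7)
The strategy for all three parts is the same: write out the conditions $(x,u)\in L$, $(y,v)\in M=L^*$ and $\langle(x,u),(y,v)\rangle = x\tp y + u\tp v = 0$ using the definitions \eqref{elc}--\eqref{delc}, and simplify under the degeneracy assumptions on $u$ and $v$. Parts (i) and (ii) reduce to direct verification: when $u=0$, the condition $(x,0)\in L$ collapses to $x\ge 0$ and the inner product collapses to $x\tp y$, giving $(x,y)\in\C(\R^k_+)$ together with the residual constraint $e\tp y\ge\|v\|$; when $v=0$, the condition $(y,0)\in M$ collapses to $y\ge 0$ (the inequality $e\tp y\ge 0$ being automatic), and once again the inner product reduces to $x\tp y$, giving $(x,y)\in\C(\R^k_+)$ together with $x\ge\|u\|e$.

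The substantive case is (iii), and this is where the main work lies. For the forward direction I would introduce $\tilde x := x - \|u\|e$, which belongs to $\R^k_+$ by the membership $(x,u)\in L$. Expanding the inner product gives
\[
0 = x\tp y + u\tp v = \tilde x\tp y + \|u\|\, e\tp y + u\tp v.
\]
The plan is now to show that each of the three terms is forced to attain a specific value. Using $\tilde x,y\ge 0$, Cauchy--Schwarz in the form $u\tp v \ge -\|u\|\|v\|$, and the membership $(y,v)\in M$ which gives $e\tp y \ge \|v\|$, the sum is bounded below by $0 + \|u\|\|v\| - \|u\|\|v\| = 0$. Since the sum equals $0$, each inequality must be an equality: $\tilde x\tp y = 0$ (so $(x-\|u\|e,y)\in\C(\R^k_+)$), $\|u\|\, e\tp y = \|u\|\|v\|$ which with $u\ne 0$ forces $e\tp y = \|v\|$, and $u\tp v = -\|u\|\|v\|$, the equality case of Cauchy--Schwarz, which forces $v = -\lambda u$ for some scalar $\lambda$. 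The hypothesis $v\ne 0$ rules out $\lambda=0$, and the sign $\lambda>0$ follows from $u\tp v < 0$.

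The reverse direction of (iii) is a routine check: given the three conclusions, one recovers $x\ge\|u\|e$ from $\tilde x\ge 0$, recovers $e\tp y\ge\|v\|$ and $y\ge 0$ from the stated equality and from $(x-\|u\|e,y)\in\C(\R^k_+)$, and verifies the inner product is zero by substituting $v=-\lambda u$ and using $\|v\|=\lambda\|u\|$ to see that $\|u\|\, e\tp y + u\tp v = \|u\|\|v\| - \lambda\|u\|^2 = 0$.

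The only nontrivial obstacle is recognising that the chain of lower bounds in (iii) is tight precisely when all three rigidity conditions (componentwise complementarity of $\tilde x$ and $y$, equality in the dual cone constraint, and equality in Cauchy--Schwarz) hold simultaneously; once this observation is made, the equivalence falls out in one pass. Parts (i) and (ii) require no such argument and can be dispatched in a few lines each.
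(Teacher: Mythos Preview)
Your proposal is correct and follows essentially the same route as the paper: items (i) and (ii) are dismissed as direct from the definitions, and for item (iii) both you and the paper run the chain $0 = x\tp y + u\tp v \ge \|u\|\,e\tp y + u\tp v \ge \|u\|\|v\| + u\tp v \ge 0$ and read off the three equality cases. Your introduction of $\tilde x = x-\|u\|e$ is just a notational unpacking of the first inequality $x\tp y \ge \|u\|\,e\tp y$, so the arguments coincide.
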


\begin{proof}
	Items (i) and (ii) are easy consequence of the definitions of $L$, $M$ and the complementarity set of
	a nonempty closed convex cone.
	\vspace{2mm}

	Item (iii) follows from Proposition 1 of \cite{FN2016}. For the sake of completeness, we will reproduce its
    proof here. First assume that there exists $\lambda>0$ such that $v=-\lambda u$,
	$e\tp y=\|v\|$ and $(x-\|u\|e,y)\in C(\R^p_+)$. Thus, $(x,u)\in L$ and $(y,v)\in M$. On the other
	hand, \[\lng (x,u),(y,v)\rng=x\tp y+u\tp v=\|u\|e\tp y-\lambda\|u\|^2=\|u\|\|v\|-\lambda\|u\|^2=0.\]
	Thus, $(x,u,y,v)\in C(L)$.

	Conversely, if $(x,u,y,v)\in C(L)$, then $(x,u)\in L$, $(y,v)\in M$ and
	\[0=\lng (x,u),(y,v)\rng=x\tp y+u\tp v\ge\|u\|e\tp y+
	u\tp v\ge\|u\|\|v\|+u\tp v\ge0.\]
	This implies the existence of a $\lambda>0$ such that $v=-\lambda u$, $e\tp y=\|v\|$ and
	$(x-\|u\|e)\tp y=0$. It follows that $(x-\|u\|e,y)\in C(\R^p_+)$.
\end{proof}

\begin{theorem}\label{Main_thm}
	Denote $z=(x,u)$, $\hat z=(x-\|u\|,u)$, $\tilde{z}=(x-t,u,t)$ and $r=(p,q)$ with $x,p\in\R^k$, $u,q\in\R^\ell$ and $t\in \R$. Let $T=\bs A & B\\C & D \es$ with $A\in\R^{k\times k}$,
	$B\in\R^{k\times\ell}$, $C\in\R^{\ell\times k}$ and $D\in\R^{\ell\times\ell}$. The square matrices $T$, $A$ and $D$ are assumed to be
	nonsingular.
	\begin{enumerate}
		\item[(i)] Suppose $u=0$. We have
        \begin{align*}
            z\in\SLCP&(T,r,L)
            \\&\iff x\in\SLCP(A,p,\R^k_+)\mbox{ and }e\tp(Ax+p)\ge\|Cx+q\|.
        \end{align*}			
		\item[(ii)] Suppose $Cx+Du+q=0$. Then, \[z\in\SLCP(T,r,L)\iff z\in\SMixCP(F_1,F_2,\R^k_+)
			\mbox{ and }x\ge\|u\|,\] where $F_1(x,u)=Ax+Bu+p$ and $F_2(x,u)=0$.
		\item[(iii)] Suppose $u\ne 0$ and $Cx+Du+q\ne 0$.
			We have \[z\in\SLCP(T,r,L)\iff z\in\SMixICP(F_1,F_2,G_1,\R^k_+),\] where
			\[F_2(x,u)=\lf(\|u\|C+ue\tp A\rg)x+ue\tp(Bu+p)+\|u\|(Du+q),\] $G_1(x,u)=x-\|u\|e$ and $F_1(x,u)=Ax+Bu+p$.
		\item[(iv)] Suppose $u\ne 0$ and $Cx+Du+q\ne 0$.
			We have \[z\in\SLCP(T,r,L)\iff \hat z\in\SMixCP(F_1,F_2,\R^k_+),\] where
			\[F_2(x,u)=\lf(\|u\|C+ue\tp A\rg)(x+\|u\|e)+ue\tp(Bu+p)+\|u\|(Du+q)\] and $F_1(x,u)=A(x+\|u\|e)+Bu+p$.
		\item[(v)] Suppose $u\ne 0$, $Cx+Du+q\ne 0$ and $\|u\|C+u\tp e A$ is a nonsingular matrix. We have
			\[z\in\SLCP(T,r,L)\iff \hat z\in\SICP(F_1,F_2,\R^k_+),\] where
			\[F_1(u)=A\lf(\lf(\|u\|C+ue\tp A\rg)^{-1}\lf(ue\tp(Bu+p)+\|u\|(Du+q)\rg)\rg)+Bu+p\] and
			\[F_2(u)=\lf(\|u\|C+ue\tp A\rg)^{-1}\lf(ue\tp(Bu+p)+\|u\|(Du+q)\rg).\]
        \item[(vi)] Suppose $u\ne 0$, $Cx+Du+q\ne 0$. We have
            \[z\in\SLCP(T,r,L)\iff \exists t>0\] such that \[\tilde{z}\in\MixCP(\widetilde{F}_1,\widetilde{F}_2,\R^k_+),\] where
            \[\widetilde{F}_1(x,u,t)=A(x+te)+Bu+p\] and
            \begin{equation}\label{mathcal_F}
                \widetilde{F}_2(x,u,t) =
                \begin{pmatrix}
                    & \lf(tC+ue\tp A\rg)(x+te)+ue\tp(Bu+p)+t(Du+q) \\
                    & t^2 - \|u\|^2
                \end{pmatrix}.
            \end{equation}

	\end{enumerate}
\end{theorem}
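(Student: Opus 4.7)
The plan is to reduce item (vi) directly to Proposition~\ref{cs-esoc-prop}(iii), treating the scalar $t$ as an extra unknown whose identification with $\|u\|$ is enforced by the scalar equation $t^2-\|u\|^2=0$ together with $t>0$. Under the hypotheses $u\ne 0$ and $Cx+Du+q\ne 0$, Proposition~\ref{cs-esoc-prop}(iii) characterises $z=(x,u)\in\SLCP(T,r,L)$ as the existence of $\lambda>0$ satisfying the three conditions
\[
Cx+Du+q=-\lambda u,\qquad e\tp(Ax+Bu+p)=\lambda\|u\|,\qquad (x-\|u\|e,\,Ax+Bu+p)\in\C(\R^k_+).
\]
Call this system $(\star)$. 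It suffices to show that $(\star)$ is equivalent to the existence of $t>0$ such that $\tilde z\in\SMixCP(\widetilde F_1,\widetilde F_2,\R^k_+)$.

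The bridge is the identity $(x-te)+te=x$, which collapses the combinations $x+te$ inside $\widetilde F_1$ and $\widetilde F_2$ once these are evaluated at the MixCP point $\tilde z=(x-te,u,t)$, giving
\[
\widetilde F_1(\tilde z) = Ax+Bu+p, \qquad \widetilde F_2(\tilde z) = \bigl(\,t(Cx+Du+q)+u\,e\tp(Ax+Bu+p),\; t^2-\|u\|^2\,\bigr).
\]
Thus the MixCP clause $(x-te,\widetilde F_1(\tilde z))\in\C(\R^k_+)$ becomes, once $t=\|u\|$ is established, the third condition of $(\star)$; the scalar part of $\widetilde F_2(\tilde z)=0$ gives exactly $t=\|u\|$ (using $t>0$); and the vector part of $\widetilde F_2(\tilde z)=0$ reads $\|u\|(Cx+Du+q)+u\,e\tp(Ax+Bu+p)=0$, which packages the first two conditions of $(\star)$ once one sets $\lambda:=e\tp(Ax+Bu+p)/\|u\|$.

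The forward implication is then immediate: given $z\in\SLCP(T,r,L)$ and $\lambda>0$ from $(\star)$, set $t:=\|u\|$ and read each of the three pieces of the MixCP system off the corresponding piece of $(\star)$. For the converse, any MixCP solution with $t>0$ forces $t=\|u\|$ via the scalar equation, so defining $\lambda:=e\tp(Ax+Bu+p)/\|u\|$ recovers the first two conditions of $(\star)$; positivity of $\lambda$ follows because the complementarity condition gives $Ax+Bu+p\ge 0$, hence $\lambda\ge 0$, and $\lambda=0$ would reduce the first identity to $Cx+Du+q=0$, contradicting the standing hypothesis of item (vi). The only real obstacle is the notational bookkeeping between the original $x\in\R^k$ and the MixCP's first slot $x-te$; once the identity $(x-te)+te=x$ is absorbed into $\widetilde F_1$ and $\widetilde F_2$, item (vi) is a direct restatement of Proposition~\ref{cs-esoc-prop}(iii).
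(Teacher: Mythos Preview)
Your argument is correct and follows the same strategy as the paper: both reduce item (vi) to Proposition~\ref{cs-esoc-prop}(iii) via the identification $t=\|u\|$, and both compute $\widetilde F_1(\tilde z)=Ax+Bu+p$ and the vector part of $\widetilde F_2(\tilde z)=t(Cx+Du+q)+u\,e\tp(Ax+Bu+p)$ by collapsing $(x-te)+te=x$. Your proof is in fact more complete than the paper's: the paper only writes out the forward implication for item (vi) and stops, whereas you supply the converse as well, including the check that $\lambda:=e\tp(Ax+Bu+p)/\|u\|$ is strictly positive (using $Ax+Bu+p\ge0$ from complementarity, and the standing hypothesis $Cx+Du+q\ne0$ to rule out $\lambda=0$).
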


\begin{proof}
	\begin{enumerate}
		\item[(i)] We have that $z\in\SLCP(T,r,L)$ is equivalent to
			$(x,0,Ax+p,Cx+q)\in\C(L)$ or, by item (i) of Proposition \ref{cs-esoc-prop}, to
			$(x,Ax+p)\in\C(\R^k_+)$ and $e\tp(Ax+p)\ge\|Cx+q\|$.
		\item[(ii)] We have that $z\in\SLCP(T,r,L)$ is equivalent to
			$(x,u,Ax+Bu+p,0)\in\C(L)$ or, by item (ii) of Proposition \ref{cs-esoc-prop}, to
			$(x,Ax+Bu+p)\in\C(\R^k_+)$ and $x\ge\|u\|$, or to
			\[z\in\SMixCP(F_1,F_2,\R^k_+)\mbox{ and }x\ge\|u\|,\] where $F_1(x,u)=Ax+Bu+p$ and
			$F_2(x,u)=0$.
		\item[(iii)] Suppose that $z\in\SLCP(T,r,L)$. Then, $(x,u,y,v)\in\C(L)$, where $y=Ax+Bu+p$ and $v=Cx+Du+q$. Then, by item (iii)
			of Proposition \ref{cs-esoc-prop}, we have that $\exists\lambda>0$ such that
			\begin{equation}\label{parall-eq}
				Cx+Du+q=v=-\lambda u,
			\end{equation}
			\begin{equation}\label{prod-eq}
				e\tp (Ax+Bu+p)=e\tp y=\|v\|=\|Cx+Du+q\|=\lambda\|u\|,
			\end{equation}
			\begin{equation}\label{cpset-eq}
				(G_1(x,u),F_1(x,u))=(x-\|u\|e,Ax+Bu+p)=(x-\|u\|e,y)\in\C(\R^k_+).
			\end{equation}
			From equation \eqref{parall-eq} we obtain $\|u\|(Cx+Du+q)=-\lambda\|u\| u$, which by equation \eqref{prod-eq} implies
			$\|u\|(Cx+Du+q)=-ue\tp (Ax+Bu+p)$, which after some algebra gives
			\begin{equation}\label{zero-eq}
				F_2(x,u)=0.
			\end{equation}
			From equations \eqref{cpset-eq} and \eqref{zero-eq} we obtain that $z\in\SMixICP(F_1,F_2,G_1)$.
			\medskip
			
			Conversely, suppose that $z\in\SMixICP(F_1,F_2,G_1)$. Then,
			\begin{equation}\label{zero-eq2}
				\|u\|v+ue\tp y=\|u\|(Cx+Du+q)+ue\tp (Ax+Bu+p)=F_2(x,u)=0
			\end{equation}
			and
			\begin{equation}\label{cpset-eq2}
				(x-\|u\|e,y)=(x-\|u\|e,Ax+Bu+p)=(G_1(x,u),F_1(x,u))\in\C(\R^k_+),
			\end{equation}
			where $v=Cx+Du+q$ and $y=Ax+Bu+p$. Equations \eqref{cpset-eq2} and \eqref{zero-eq2} imply
			\begin{equation}\label{parall-eq2}
				v=-\lambda u,
			\end{equation}
			where
			\begin{equation}\label{lambda-eq}
				\lambda=(e\tp y)/\|u\|>0.
			\end{equation}
			Equations \eqref{parall-eq2} and \eqref{lambda-eq} imply
			\begin{equation}\label{norm-v-eq}
				e\tp y=\|v\|.
			\end{equation}
			By item (iii) of Proposition \ref{cs-esoc-prop}, equations \eqref{parall-eq2}, \eqref{norm-v-eq} and \eqref{cpset-eq2}
			imply \[(x,y,u,v)\in C(L)\] and therefore $z\in\SLCP(T,r,L)$.
		\item[(iv)] It is a simple reformulation of item (iii) by using the change of variables \[(x,u)\mapsto (x-\|u\|e,u).\]
		\item[(v)] Again it is a simple reformulation of item (iv) by using that $\|u\|C+u\tp e A$ is a nonsingular matrix.
        \item[(vi)]  Suppose that $z\in\SLCP(T,r,L)$. Then, $(x,u,y,v)\in\C(L)$, where $y=Ax+Bu+p$ and $v=Cx+Du+q$. Let $t = \|u\|$, Then, by item (iii)
			of Proposition \ref{cs-esoc-prop}, we have that $\exists\lambda>0$ such that
			\begin{equation}\label{parall-eq3}
				Cx+Du+q=v=-\lambda u,
			\end{equation}
			\begin{equation}\label{prod-eq3}
				e\tp (Ax+Bu+p)=e\tp y=\|v\|=\|Cx+Du+q\|=\lambda t,
			\end{equation}
			\begin{equation}\label{cpset-eq3}
				(\tilde{z},\widetilde{F}_1(x,u,t))=(x-te,Ax+Bu+p)=(x-te,y)\in\C(\R^k_+),
			\end{equation}
			where $\tilde{z} = (x-t,u,t)$. From equation \eqref{parall-eq3}, we get $t(Cx+Du+q)=-t \lambda u$, which, by equation \eqref{prod-eq3}, implies
			$t(Cx+Du+q)=-ue\tp (Ax+Bu+p)$, which after some algebra gives
			\begin{equation}\label{zero-eq3}
				\widetilde{F}_2(x,u,t)=0.
			\end{equation}
			From equations \eqref{cpset-eq3} and \eqref{zero-eq3} we obtain that $z\in\SMixCP(\widetilde{F}_1,\widetilde{F}_2,\R^k_+)$.
			\medskip
	\end{enumerate}
	$\,$ 
\end{proof}
Note that the item(vi) makes $\widetilde{F}_1(x,u,t)$ and $\widetilde{F}_2(x,u,t)$ become smooth functions by adding the variable $t$. The
smooth functions therefore make the smooth Newton's method applicable to the mixed complementarity problem.

The conversion of $\LCP$ on extended second order cones to a $\MixCP$ problem defined on the non-negative orthant is useful, because it can be
studied by using the Fischer-Burmeister function. In order to ensure the existence of the
solution of $\MixCP$, we introduce the scalar \emph{Fischer-Burmeister C-function} (see \cite{fischer1992special, fischer1995newton}).

\begin{equation*}
    \psi_{FB}(a,b) = \sqrt{a^2+b^2} - (a+b) \quad \forall (a,b) \in \mathbb{R}^2.
\end{equation*}

Obviously, $\psi_{FB}^2(a,b)$ is a continuously differentiable function on $\mathbb{R}^2$.
The equivalent FB-based equation formulation for the $\MixCP$ problem is:

\begin{equation}\label{FBform}
    0= \mathbb{F}^{\MixCP}_{FB}(x,u,t) =
    \begin{pmatrix}
         \psi(x_1,\widetilde{F}_1^1(x,u,t)) \\
         \vdots \\
         \psi(x_k,\widetilde{F}_1^k(x,u,t)) \\
         \widetilde{F}_2(x,u,t)
    \end{pmatrix},
\end{equation}
with the associated merit function:

\begin{equation*}
    \theta^{\MixCP}_{FB}(x,u,t)=\frac{1}{2}\mathbb{F}^{\MixCP}_{FB}(x,u,t)^T \mathbb{F}^{\MixCP}_{FB}(x,u,t).
\end{equation*}
We continue by calculating the Jacobian matrix for the associated merit function. If $i \in (1,...,k)$ is such that $(z_i,\widetilde{F}_1^i) \neq
(0,0)$, then the differential with respect to $z = (x,u,t)\in \R^{m+1}$ is
\begin{align*}
    \frac{\partial\left(\mathbb{F}^{\MixCP}_{FB}\right)_i}{\partial z} =
    \left(\frac{x_i}{\sqrt{x_i^2+\left(\widetilde{F}_1^i(x,u,t)\right)^2}}-1\right)e^i &\\
    +\left(\frac{\widetilde{F}_1^i(x,u,t)}{\sqrt{x_i^2+\left(\widetilde{F}_1^i(x,u,t)\right)^2}}-1\right) & \frac{\partial\widetilde{F}_1^i(x,u,t)}{\partial z},
\end{align*}		
where $e^i$ denotes the $i$-th canonical unit vector. The differential with respect to  $z_j$ with $j\neq i$ is
\begin{equation*}
    \frac{\partial\left(\mathbb{F}^{\MixCP}_{FB}\right)_i}{\partial z_j} = \left(\frac{\widetilde{F}_1^i(x,u,t)}{\sqrt{x_i^2+\left(\widetilde{F}_1^i(x,u,t)\right)^2}}-1\right)\frac{\partial\widetilde{F}_1^i(x,u,t)}{\partial z_j},
\end{equation*}
Obviously, the differential with respect to $z_j$ with $j > k$, is equal to zero. Note that if $(z_i,\widetilde{F}_1^i) = (0,0)$, then
$\frac{\partial\left(\mathbb{F}^{\MixCP}_{FB}\right)_i}{\partial z}$ will be a generalised gradient of a composite function, i.e., a closed unit
ball $B(0,1)$. However, this case will not occur in our paper.
As for the term $\widetilde{F}_2(x,u,t)$ with $i \in ( k+1,...m+1)$, the Jacobian matrix is much more simple, since
\begin{equation*}
     \frac{\partial\left(\mathbb{F}^{\MixCP}_{FB}\right)_i}{\partial z}= \frac{\partial\widetilde{F}_2^i(x,u,t)}{\partial z}.
\end{equation*}
Therefore, the Jacobian matrix for the associated merit function is:
\begin{equation*}
    \mathcal{A} =
        \begin{pmatrix}
            D_a+D_bJ_x\widetilde{F}_1(x,u,t) &&&&& D_bJ_{(u,t)}\widetilde{F}_1(x,u,t)\\
            J_x\widetilde{F}_2(x,u,t) &&&&& J_{(u,t)}\widetilde{F}_2(x,u,t)
        \end{pmatrix},
\end{equation*}
where
\begin{align*}
    &D_a= \diag
    \begin{pmatrix}
        \frac{x_i}{\sqrt{x_i^2 + \widetilde{F}_1^i(x,u,t)^2}} - 1
    \end{pmatrix},
    \qquad
    D_b=\diag
    \begin{pmatrix}
        \frac{\widetilde{F}_1^i(x,u,t)}{\sqrt{x_i^2 + \widetilde{F}_1^i(x,u,t)^2}} - 1
    \end{pmatrix},\\
    & i=1, \dots , k.
\end{align*}

Define the following index sets:
\begin{equation*}
    \begin{array}{lcl}
    \C \equiv \lf\{i: x_i \ge 0, \widetilde{F}_1^i\ge 0, x_i \widetilde{F}_1^i(x,u,t) = 0\rg\} &  & complementarity\;index \\
    \mathcal{R} \equiv \lf\{1, \dots, k\rg\}\setminus \C &  & residual\;index \\
    \mathcal{P} \equiv \lf\{i\in \mathbb{R} : x_i > 0 , \widetilde{F}_1^i(x,u,t) > 0\rg\} &  & positive\;index \\
    \mathcal{N} \equiv \mathcal{R}\setminus\mathcal{P} &  & negative\;index \\
    \end{array}
\end{equation*}

\begin{definition}\label{FB_regular}

    A point $(x,u,t) \in \R^{m+1} $ is called FB-regular for the merit function $ \theta^{\MixCP}_{FB}$ (or for the $ \MixCP\left( \widetilde{F}_1,\widetilde{F}_2,\R^k_+ \right)$) if its
    partial Jacobian matrix of $ \mathbb{F}^{\MixCP}_{FB}(x,u,t) $ with respect to x, $ J_x\widetilde{F}_1(x,u,t) $ is nonsingular and if for $\forall w \in \R^k, w\neq 0 $ with
    \begin{equation*}
       w_{\C}=0, \quad w_{\mathcal{P}}>0,  \quad w_{\mathcal{N}}<0,
    \end{equation*}
    there exists a nonzero vector $ v \in \R^k $ such that
    \begin{equation}\label{z_ineq}
        v_{\C}=0, \quad v_{\mathcal{P}}\ge 0,  \quad v_{\mathcal{N}}\le0,
    \end{equation}
    and
    \begin{equation}\label{sch_c}
        w^T\left( \Pi(x,u,t)/J_x\widetilde{F}_1(x,u,t)\right)v \ge 0,
    \end{equation}
    where
    \begin{equation*}
        \Pi(x,u,t) \equiv
        \begin{pmatrix}
             J_x\widetilde{F}_1(x,u,t)  &&&&&  J_{(u,t)}\widetilde{F}_1(x,u,t)\\
             J_x\widetilde{F}_2(x,u,t)  &&&&&  J_{(u,t)}\widetilde{F}_2(x,u,t)
        \end{pmatrix}
        \in \R^{(m+1)\times(m+1)},
    \end{equation*}
    and $ \Pi(x,u,t)/J_x\widetilde{F}_1(x,u,t) $ is the Schur complement of $ J_x\widetilde{F}_1(x,u,t)$ in $ \Pi(x,u,t) $.
\end{definition}
 In our case, for the $ \MixCP\left( \widetilde{F}_1,\widetilde{F}_2,\R^k_+ \right)$,  the Jacobian matrices are:

\begin{align*}
    J\widetilde{F}_1(x,u,t) \equiv
    \left(\widetilde{A}\textrm{ }\textrm{ }\widetilde{B}\right)
\end{align*}
and

\begin{align*}
    J\widetilde{F}_2 (x,u,t) \equiv
    \left(\widetilde{C}\textrm{ }\textrm{ }\widetilde{D}\right)
\end{align*}
where
\begin{equation*}
    \widetilde{A} = A, \qquad
    \widetilde{B} =
    \left(B\textrm{ }\textrm{ }Ae\right)
    \qquad
        \widetilde{C} =
    \begin{pmatrix}
        tC+ue^\top A \\
        0
    \end{pmatrix},
\end{equation*}

\begin{small}
\begin{equation*}
    \widetilde{D} =
    \begin{pmatrix}
    e^\top \left(A(x+te)+Bu+p\right)I + \diag(e^{\top}Bu) + tD &&&&& Cx+2tCe+ue^\top Ae+Du \\
    -2u^\top &&&&& 2t
    \end{pmatrix}.
\end{equation*}
\end{small}

In our case, if the Jacobian matrix block $ J_x\widetilde{F}_1(x,u,t) = A$ is nonsingular, then the Schur complement $
\Pi(x,u,t)/J_x\widetilde{F}_1(x,u,t) $ is

\begin{equation}\label{sch_eq}
    \left( \Pi(x,u,t)/J_x\widetilde{F}_1(x,u,t)\right) =
    \widetilde{D}-\widetilde{C}\widetilde{A}^{-1}\widetilde{B}.
\end{equation}

\begin{proposition}
	If the matrices $\widetilde{A}$ and $\widetilde{D}$ are nonsingular for any $z \in \R^{m+1}$, then the Jacobian matrix $\mathcal{A}$ for the associated merit function is nonsingular.
\end{proposition}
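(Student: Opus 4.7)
My plan is to show that $\ker\mathcal{A}=\{0\}$. Partition a candidate null vector as $(v_1,v_2)\in\R^k\times\R^{\ell+1}$; the block equations $\mathcal{A}(v_1,v_2)^\top=0$ read
\begin{equation*}
(D_a+D_b\widetilde{A})v_1+D_b\widetilde{B}v_2=0,\qquad
\widetilde{C}v_1+\widetilde{D}v_2=0.
\end{equation*}
Since $\widetilde{D}$ is nonsingular, I eliminate $v_2=-\widetilde{D}^{-1}\widetilde{C}v_1$ from the second equation and substitute into the first, reducing the task to proving that the $k\times k$ matrix $D_a+D_bS$ is nonsingular, where $S=\widetilde{A}-\widetilde{B}\widetilde{D}^{-1}\widetilde{C}$ is the Schur complement $\Pi/\widetilde{D}$.

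Next I would exploit the Fischer--Burmeister identity $(\alpha_i+1)^2+(\beta_i+1)^2=1$, which guarantees that the diagonal entries satisfy $\alpha_i,\beta_i\le 0$ and excludes simultaneous vanishing $(\alpha_i,\beta_i)=(0,0)$ (the degenerate case $(x_i,\widetilde{F}_1^i)=(0,0)$ being explicitly ruled out in the paper). Split $\{1,\dots,k\}$ into $\mathcal{N}=\{i:\beta_i=0\}$ (so necessarily $\alpha_i=-1$) and $\mathcal{M}=\{i:\beta_i<0\}$. The rows indexed by $\mathcal{N}$ give directly $(v_1)_i=0$, so $v_1$ is supported in $\mathcal{M}$. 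On the remaining rows, left multiplication by the nonsingular diagonal block $(D_b^{\mathcal{M}})^{-1}$ turns the reduced system into $(\Delta+S_{\mathcal{M}\mathcal{M}})w=0$, where $w=(v_1)_{\mathcal{M}}$ and $\Delta$ is a diagonal matrix with non-negative entries $\alpha_i/\beta_i\ge 0$.

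The main obstacle is the nonsingularity of $\Delta+S_{\mathcal{M}\mathcal{M}}$. The standing hypothesis on $\widetilde{A}$ enters here naturally through the identity $\det\Pi=\det\widetilde{A}\,\det(\widetilde{D}-\widetilde{C}\widetilde{A}^{-1}\widetilde{B})=\det\widetilde{D}\,\det S$, which ties the sign of $\det S$ to that of $\det\widetilde{A}$ once $\det\widetilde{D}\ne 0$ is in hand. I would then combine this with the multilinear expansion $\det(\Delta+S_{\mathcal{M}\mathcal{M}})=\sum_{I\subseteq\mathcal{M}}\bigl(\prod_{i\in I}\Delta_{ii}\bigr)\det(S_{\mathcal{M}\setminus I,\mathcal{M}\setminus I})$ and with the explicit expressions for $\widetilde{B}$, $\widetilde{C}$, and $\widetilde{D}$ stated just before the proposition---in particular the zero bottom block of $\widetilde{C}$---to argue that every principal minor of $S_{\mathcal{M}\mathcal{M}}$ carries a common sign, so that all summands contribute with the same sign and the determinant cannot vanish. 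Once $w=0$ is forced we obtain $v_1=0$, and the second original block equation together with the nonsingularity of $\widetilde{D}$ then yields $v_2=0$, finishing the proof.
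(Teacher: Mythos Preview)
Your reduction is sound up to the point where you obtain $(\Delta+S_{\mathcal{M}\mathcal{M}})w=0$ with $\Delta\ge 0$ diagonal and $S=\widetilde{A}-\widetilde{B}\widetilde{D}^{-1}\widetilde{C}=\Pi/\widetilde{D}$. The difficulty---which you correctly identify as ``the main obstacle''---is exactly where the argument breaks down. The hypotheses give you only that $\widetilde{A}$ and $\widetilde{D}$ are nonsingular; via $\det\Pi=\det\widetilde{A}\det(\Pi/\widetilde{A})=\det\widetilde{D}\det S$ this tells you that $\det S\ne 0$, but it says nothing whatsoever about the \emph{proper} principal minors of $S$, let alone that they all share a sign. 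In your multilinear expansion
\[
\det(\Delta+S_{\mathcal{M}\mathcal{M}})=\sum_{I\subseteq\mathcal{M}}\Bigl(\prod_{i\in I}\Delta_{ii}\Bigr)\det S_{\mathcal{M}\setminus I,\,\mathcal{M}\setminus I},
\]
already the $I=\emptyset$ term is $\det S_{\mathcal{M}\mathcal{M}}$, which, whenever $\mathcal{M}\subsetneq\{1,\dots,k\}$, is a proper principal minor of $S$ and is not controlled by any assumption in force. The structural features you invoke (the zero last row of $\widetilde{C}$, the explicit form of $\widetilde{D}$) do not by themselves impose a P-matrix--type sign pattern on $S$: the blocks $A,B,C,D$ enter generically, so for suitable data one can make principal minors of $S$ take either sign or vanish. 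Your plan therefore does not close the gap; it defers the actual work to a claim that is neither proved nor plausible under the stated hypotheses.

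For comparison, the paper's own argument proceeds differently: it takes the Schur complement with respect to the \emph{upper-left} block $D_a+D_b\widetilde{A}$ rather than with respect to $\widetilde{D}$, and then simply asserts that $D_a+D_b\widetilde{A}$ and its Schur complement are nonsingular if and only if $\widetilde{A}$ and $\widetilde{D}$ are. That proof is extremely terse and does not supply the justification either; your attempt is more honest in isolating where the real difficulty lies, but it does not overcome it.
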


\begin{proof}
    It is easy to check that
    \begin{equation*}
        \mathcal{A} =
        \begin{pmatrix}
            D_a+D_b\widetilde{A} &&&&& D_b\widetilde{B}\\
            \widetilde{C} &&&&& \widetilde{D}
        \end{pmatrix}.
    \end{equation*}
    $\mathcal{A}$ is a nonsingular matrix if and only if the sub-matrix $D_a+D_b\widetilde{A}$ and its Schur complement are nonsingular, and they are nonsingular if and only if the matrices $\widetilde A$ and $\widetilde{D}$ are nonsingular.
\end{proof}

The following theorem is \cite[Theorem 9.4.4]{FacchineiPang2003}. For the sake of completeness, we provide a proof here.

\begin{theorem}\label{SP}
          A point $(x, u, t)\in \R^{m+1}$ is a solution of the $ \MixCP(\widetilde{F}_1, \widetilde{F}_2,\R^k)$ if and only if $(x,u,t) $ is an FB regular point of $ \theta_{FB}^{\MixCP}$ and $ (x, u, t)$ is a stationary point of $ \mathbb{F}_{FB}^{\MixCP}$.
\end{theorem}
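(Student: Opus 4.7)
The plan is to prove the two directions separately, with most of the effort going into sufficiency.

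For necessity, suppose $(x,u,t)$ solves $\MixCP(\widetilde{F}_1,\widetilde{F}_2,\R^k_+)$. By construction of the Fischer--Burmeister reformulation, $\mathbb{F}^{\MixCP}_{FB}(x,u,t)=0$, so $\theta^{\MixCP}_{FB}(x,u,t)=0$; since $\theta^{\MixCP}_{FB}\ge 0$ everywhere, $(x,u,t)$ is a global minimiser and hence a stationary point. For FB-regularity, complementarity at every coordinate forces $\mathcal{R}=\emptyset$, so $\mathcal{P}=\mathcal{N}=\emptyset$ and the sign conditions in Definition \ref{FB_regular} are vacuous; the definition collapses to nonsingularity of $J_x\widetilde{F}_1=\widetilde{A}$, which holds by assumption.

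For sufficiency, assume $(x,u,t)$ is FB-regular and stationary, and argue by contradiction that $\mathbb{F}^{\MixCP}_{FB}(x,u,t)\ne 0$. Stationarity gives $\mathcal{A}^{\top}\mathbb{F}^{\MixCP}_{FB}(x,u,t)=0$; writing $\mathbb{F}^{\MixCP}_{FB}=(\phi,\widetilde{F}_2)$ with $\phi_i=\psi_{FB}(x_i,\widetilde{F}_1^i)$, this is the block system
\begin{align*}
(D_a+\widetilde{A}^{\top}D_b)\phi+\widetilde{C}^{\top}\widetilde{F}_2&=0,\\
\widetilde{B}^{\top}D_b\phi+\widetilde{D}^{\top}\widetilde{F}_2&=0.
\end{align*}
A direct sign analysis of $\psi_{FB}$ shows $\phi_i=0$ on $\mathcal{C}$, $\phi_i<0$ on $\mathcal{P}$, $\phi_i>0$ on $\mathcal{N}$, while the diagonal entries of $D_a$ and $D_b$ are non-positive and strictly negative on $\mathcal{P}\cup\mathcal{N}$. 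The vector $w:=-\phi$ is therefore nonzero and satisfies $w_{\mathcal{C}}=0$, $w_{\mathcal{P}}>0$, $w_{\mathcal{N}}<0$, so FB-regularity supplies a nonzero $v$ satisfying \eqref{z_ineq} and \eqref{sch_c}.

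The decisive step is to exploit nonsingularity of $\widetilde{A}$ together with the Schur complement identity \eqref{sch_eq} to eliminate $\widetilde{F}_2$ from the block system. Solving the first equation for $\widetilde{F}_2$ in terms of $\phi$ and substituting into the second yields, after some algebra, an identity of the shape
\begin{equation*}
w^{\top}\bigl(\Pi(x,u,t)/J_x\widetilde{F}_1(x,u,t)\bigr)v \;=\; -\sum_{i\in\mathcal{P}\cup\mathcal{N}}\alpha_i\,w_i v_i,
\end{equation*}
where the $\alpha_i>0$ are built from diagonal entries of $D_a$ and $D_b$, and $v$ is aligned with $w$ on the residual indices. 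On $\mathcal{P}$ both $w_i>0$ and $v_i\ge 0$, and on $\mathcal{N}$ both $w_i<0$ and $v_i\le 0$, so the right-hand side is strictly negative because at least one coordinate of $w$ in $\mathcal{P}\cup\mathcal{N}$ is nonzero. This contradicts \eqref{sch_c}, forcing $\phi\equiv 0$; the second block equation together with nonsingularity of $\widetilde{D}$ then gives $\widetilde{F}_2=0$, so $\mathbb{F}^{\MixCP}_{FB}(x,u,t)=0$ and $(x,u,t)$ solves the MixCP.

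The main obstacle is the construction of the witness vector $v$: it must be nonzero, respect the sign pattern $v_{\mathcal{C}}=0$, $v_{\mathcal{P}}\ge 0$, $v_{\mathcal{N}}\le 0$, and simultaneously make the Schur complement form strictly negative when paired with $w$. Tracking the diagonal sign patterns of $D_a$ and $D_b$ across the three index classes, and checking that the coupling between $\phi$ and $\widetilde{F}_2$ imposed by stationarity does not cancel the strict negativity, is the source of most of the technical bookkeeping.
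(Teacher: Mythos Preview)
Your necessity direction matches the paper's argument exactly.

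For sufficiency, the overall architecture (argue by contradiction, use the sign pattern of the Fischer--Burmeister residuals on the index classes $\mathcal{C},\mathcal{P},\mathcal{N}$, and play this against FB-regularity and stationarity) is the same as the paper's. However, there is a genuine gap in your key step.

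You set $w=-\phi$, invoke FB-regularity to obtain a vector $v$ satisfying \eqref{z_ineq} and \eqref{sch_c}, and then assert that eliminating $\widetilde F_2$ from the stationarity system yields an identity
\[
w^{\top}\bigl(\Pi/J_x\widetilde F_1\bigr)v \;=\; -\sum_{i\in\mathcal{P}\cup\mathcal{N}}\alpha_i\,w_iv_i.
\]
This cannot come out of the stationarity equations alone: those equations relate $\phi$, $\widetilde F_2$ and the Jacobian blocks, but they say nothing about the vector $v$, which is supplied externally by FB-regularity and is not otherwise constrained. Your phrase ``$v$ is aligned with $w$ on the residual indices'' suggests you are simultaneously treating $v$ as given by the regularity hypothesis and as something you construct; these are incompatible roles. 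Moreover, even granting the displayed identity, your strictness claim (``strictly negative because at least one coordinate of $w$ in $\mathcal{P}\cup\mathcal{N}$ is nonzero'') does not follow: the $v$ produced by FB-regularity is only required to be \emph{nonzero}, and could vanish precisely on the indices where $w$ does not, making the sum zero rather than strictly negative.

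The paper's proof avoids this by working directly with the vectors $D_a\phi$ and $D_b\phi$, which share a common sign pattern on $\mathcal{C},\mathcal{P},\mathcal{N}$, and pairing the stationarity identity $\mathcal{A}^\top\mathbb{F}^{\MixCP}_{FB}=0$ against a test vector with that sign pattern to force $w^\top D_a\phi>0$ into contradiction with the FB-regularity inequality. The point is that the \emph{same} diagonal-times-$\phi$ vectors appear both in the stationarity system and carry the required signs, so no separate ``$v$ from regularity'' needs to be reconciled with an identity derived from stationarity. Rework your sufficiency argument so that the vector you test against stationarity is built from $D_a\phi$ or $D_b\phi$ (not an unrelated $v$), and the contradiction with \eqref{sch_c} will follow.
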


\begin{proof}

    Suppose that $z^{*}=(x^{*},u^{*},t^{*})\in \SMixCP(\widetilde{F}_1,\widetilde{F}_2,\R^k)$. Then, it follows that $z^{*}$ is a global minimum
    and hence a stationary point of $ \theta_{FB}^{\MixCP}$. Thus, $(x^{*},\widetilde{F}_1(z^{*}))\in\C(\R^k_+)$, and we have
    $\mathcal{P}=\mathcal{N}=\emptyset$. Therefore, the FB regularity of $x^{*}$ holds  since $x^{*}=x_{\C}$, because there is no nonzero vector $x$ satisfying conditions (\ref{z_ineq}).
    Conversely, suppose that $x^{*}$ is FB regular and $ z^*=(x^*, u^*, t^*)$ is a stationary point of $ \theta_{FB}^{\MixCP}$. It follows that $\nabla \theta_{FB}^{\MixCP} = 0$, i.e.:

    \begin{equation*}
        \mathcal{A}^\top \mathbb{F}_{FB}^{\MixCP} =
        \begin{pmatrix}
            D_a+D_bJ_x\widetilde{F}_1(z^*) & J_x\widetilde{F}_2(z^*)\\
            D_bJ_{(u,t)}\widetilde{F}_1(z^*) & J_{(u,t)}\widetilde{F}_2(z^*)
        \end{pmatrix}
        \mathbb{F}_{FB}^{\MixCP}=0,
    \end{equation*}
    where
    \begin{align*}
        &D_a= \diag
        \begin{pmatrix}
            \frac{x_i^*}{\sqrt{(x_i^*)^2 + \widetilde{F}_1^i(z^*)^2}} - 1
        \end{pmatrix},
        \qquad
        D_b=\diag
        \begin{pmatrix}
            \frac{\widetilde{F}_1^i(z^*)}{\sqrt{(x_i^*)^2 + \widetilde{F}_1^i(z^*)^2}} - 1
        \end{pmatrix},\\
        & i=1, \dots , k.
    \end{align*}
    Hence, for any $w \in \R^{m+1}$, we have

    \begin{equation}\label{p_thm3_0}
        w^{\top}
        \begin{pmatrix}
            D_a+D_bJ_x\widetilde{F}_1(z^*) & J_x\widetilde{F}_2(z^*)\\
            D_bJ_{(u,t)}\widetilde{F}_1(z^*) & J_{(u,t)}\widetilde{F}_2(z^*)
        \end{pmatrix}
        \mathbb{F}_{FB}^{\MixCP}=0.
    \end{equation}
    Assume that $z^{*}$ is not a solution of $\MixCP $. Then, we have that the index set $\mathcal{R}$ is not empty.
    Define $v\equiv D_b\mathbb{F}_{FB}^{\MixCP}$. We have

    \begin{equation*}
        v_{\mathcal{C}} =0,
        \qquad
        v_{\mathcal{P}} >0,
        \qquad
        v_{\mathcal{N}} <0.
    \end{equation*}

    Take $w$ with

    \begin{equation*}
        w_{\mathcal{C}} =0,
        \qquad
        w_{\mathcal{P}} >0,
        \qquad
        w_{\mathcal{N}} <0.
    \end{equation*}
    From the definition of $D_a$ and $D_b$, we know that $D_a\mathbb{F}_{FB}^{\MixCP}$ and $D_b\mathbb{F}_{FB}^{\MixCP}$ have the same sign.
    Therefore,

    \begin{equation}\label{p_thm3_1}
        w^{\top}(D_a\mathbb{F}_{FB}^{\MixCP}) = w^{\top}_{\mathcal{C}}(D_a\mathbb{F}_{FB}^{\MixCP})_{\mathcal{C}} + w^{\top}_{\mathcal{P}}(D_a\mathbb{F}_{FB}^{\MixCP})_{\mathcal{P}} +w^{\top}_{\mathcal{N}}(D_a\mathbb{F}_{FB}^{\MixCP})_{\mathcal{N}} > 0.
    \end{equation}
    By the regularity of $J\widetilde{F}_1(z)^{\top}$, we have

    \begin{equation}\label{p_thm3_2}
        w^{\top}J\widetilde{F}_1(z)^{\top}(D_a\mathbb{F}_{FB}^{\MixCP})= w^{\top} J\widetilde{F}_1(z)^{\top}w \ge 0.
    \end{equation}
    The inequalities (\ref{p_thm3_1}) and (\ref{p_thm3_2}) together contradict condition (\ref{p_thm3_0}). Hence $ \mathcal{R} = \emptyset$. It means that $z^{*}$ is a solution of $\MixCP(\widetilde{F}_1,\widetilde{F}_2,\R^k)$.

\end{proof}

\section{Algorithms}
For solving a complementarity problem, there are many different algorithms available. The common algorithms include numerical methods for systems
of nonlinear equations (such as Newton's method \cite{atkinson2008introduction}), the interior point method (Karmarkar's Algorithm
\cite{karmarkar1984new}), the projection iterative method\cite{mangasarian1977solution}, and the multi-splitting method \cite{o1985multi}.
In the previous sections, we have already provided sufficient conditions for using FB regularity and stationarity to identify a solution of the
$\MixCP$ problem. In this section, we are trying to find a solution of $\LCP$ by finding the solution of $\MixCP$ which is converted from $\LCP$.
One convenient way to do this is using the Newton's Method as follows:

\begin{flushleft}
\textbf{Algorithm} (Newton's method):
\end{flushleft}

Given initial data $z^0 \in \R^{m+1}$, and $r = 10^{-7}$.

\textbf{Step 1}: Set $k = 0$.

\textbf{Step 2}: If $\mathbb{F}^{\MixCP}_{FB}(z^k)\leq r$, then STOP.

\textbf{Step 3}: Find a direction $d^k \in \R^{m+1}$ such that
\begin{equation*}
    \mathbb{F}^{\MixCP}_{FB}(z^k) + \mathcal{A}^{\top}(z^k)d^k = 0.
\end{equation*}

\textbf{Step 4}: Set $z^{k+1} := z^k + d^k $ and $k := k + 1$, go to Step 2.
\paragraph{}

 If the Jacobian matrix $\mathcal{A}^{\top}$ is nonsingular, then the direction $d^k \in \R^{m+1}$ for each step can be found. The following
 theorem, which is based on an idea similar to the one used in \cite{luenberger2015linear}, proves that such a Newton's Method can efficiently
 solve the $\LCP$ on extended second order cone (i.e. solve the problem within polynomial time), by finding the solution of the $\MixCP$:

\begin{theorem}\label{newton}
    Suppose that the Jacobian matrix $\mathcal{A}$ is nonsingular. Then, Newton's method for $\MixCP(\widetilde{F}_1,\widetilde{F}_2,\R^k_+)$ converges at least quadratically to
    \begin{equation*}
    z^* \in \SMixCP(\widetilde{F}_1,\widetilde{F}_2,\R^k_+),
    \end{equation*}
    if it starts with initial data $z^0$ sufficiently close to $z^*$.
\end{theorem}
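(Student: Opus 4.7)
My plan is to reduce the statement to the classical local quadratic convergence theorem for Newton's method applied to a system of nonlinear equations $\mathbb{F}(z)=0$, which requires (a) that $\mathbb{F}$ be continuously differentiable in a neighbourhood of the root $z^*$, (b) that its Jacobian be locally Lipschitz near $z^*$, and (c) that the Jacobian be nonsingular at $z^*$. The target system here is $\mathbb{F}^{\MixCP}_{FB}(z)=0$ from \eqref{FBform}, whose Jacobian is the matrix $\mathcal{A}$ already computed in the preceding pages.

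First I would verify smoothness of $\mathbb{F}^{\MixCP}_{FB}$ near $z^*$. The only potential source of non-smoothness is the Fischer--Burmeister component $\psi_{FB}(x_i,\widetilde F_1^i(z))$, which fails to be differentiable precisely when $(x_i,\widetilde F_1^i(z))=(0,0)$. As already observed after the computation of $\mathcal{A}$, this degenerate situation is excluded in our setting, so $\psi_{FB}$ is $C^\infty$ on a neighbourhood of each pair $(x_i^*,\widetilde F_1^i(z^*))$. Since $\widetilde F_1$ and $\widetilde F_2$ are polynomial in $(x,u,t)$ except for the term $\|u\|$, which has been replaced by the auxiliary smooth variable $t$ precisely to ensure smoothness (see the remark following the proof of Theorem \ref{Main_thm}), both components are $C^\infty$ in their arguments. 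Hence $\mathbb{F}^{\MixCP}_{FB}$ is $C^\infty$ near $z^*$, and in particular its Jacobian $\mathcal{A}$ is locally Lipschitz there.

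Next I would invoke the hypothesis that $\mathcal{A}$ is nonsingular (equivalently, $\mathcal{A}^\top$ is nonsingular), which is assumed at all $z$, in particular at $z^*$. By continuity of $\mathcal{A}$ and the fact that the set of nonsingular matrices is open, $\mathcal{A}(z)$ remains nonsingular in an open neighbourhood of $z^*$; consequently the Newton step $d^k = -(\mathcal{A}^\top(z^k))^{-1}\mathbb{F}^{\MixCP}_{FB}(z^k)$ in Step 3 of the algorithm is well defined for all iterates in this neighbourhood.

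Having verified smoothness, Jacobian nonsingularity, and local Lipschitz continuity of the Jacobian, I would then directly apply the standard Newton--Kantorovich local convergence theorem (see, e.g., \cite{atkinson2008introduction}) to conclude that there exists $\delta>0$ such that if $\|z^0-z^*\|\le\delta$, then the iterates $z^k$ generated by the algorithm remain in the neighbourhood, and the error satisfies $\|z^{k+1}-z^*\|\le C\|z^k-z^*\|^2$ for some constant $C>0$ depending only on the local Lipschitz constant of $\mathcal{A}$ and $\|\mathcal{A}^{-1}(z^*)\|$. This is precisely quadratic convergence to $z^*$. The main (and essentially only) technical point is the appeal to the earlier observation that the degenerate pair $(0,0)$ for $\psi_{FB}$ does not occur in our setting, since without this the Jacobian would only exist in a generalised (Clarke) sense and one would need a semismooth Newton analysis rather than the classical one.
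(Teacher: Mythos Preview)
Your proposal is correct and takes essentially the same approach as the paper: both reduce the statement to the classical local quadratic convergence of Newton's method for a smooth system with Lipschitz, nonsingular Jacobian. The only difference is presentational---the paper writes out the standard Taylor integral-remainder estimate explicitly (bounding $\|\mathcal{A}^{-1}(z^k)\|$ by $\beta_1$ and the Lipschitz constant of $\mathcal{A}$ by $\beta_2$, then deriving $\|z^{k+1}-z^*\|\le\tfrac12\beta_1\beta_2\|z^k-z^*\|^2$), whereas you invoke the result as a black box; you are also more careful than the paper in actually verifying the smoothness and Lipschitz hypotheses for $\mathbb{F}^{\MixCP}_{FB}$, which the paper simply asserts.
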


\begin{proof}
    Suppose that the starting point $z^0$ is close to the solution $z^*$, and suppose that $\mathcal{A}$ is a Lipschitz function. There are $\rho >0, \beta_1 >0, \beta_2 >0$, such that for all $z$ with $\lVert z - z^* \rVert < \rho$, there holds $\lVert\mathcal{A}^{-1}(z)\rVert < \beta_1$, and $\lVert\mathcal{A}(z^k) - \mathcal{A} \left(z^*)\right)\rVert \leq \beta_2\lVert z^k - z^*\rVert$. By the definition of the Newton's method, we have
    \begin{align*}
        \lVert z^{k+1} - z^* \rVert & = \lVert z^k - z^* - \mathcal{A}^{-1}(z^k)\mathbb{F}^{\MixCP}_{FB}(z^k)\rVert  \\
        & = \mathcal{A}^{-1}(z^k)\left[ \mathcal{A}(z^k)(z^k - z^*) - \left(\mathbb{F}^{\MixCP}_{FB}(z^k) - \mathbb{F}^{\MixCP}_{FB}(z^*)\right)\right],
    \end{align*}
    because $\mathbb{F}^{\MixCP}_{FB}(z^*)=0$ when $z^* \in \SMixCP$. By Taylor's theorem, we have
    \begin{equation*}
        \mathbb{F}^{\MixCP}_{FB}(z^k) - \mathbb{F}^{\MixCP}_{FB}(z^*) = \int_{0}^{1} \mathcal{A}\left(z^{k} + s(z^* - z^k)\right)(x^k - z^*)ds,
    \end{equation*}
    so
    \begin{align*}
        \lVert \mathcal{A}(z^k)(z^k - z^*) & - \left(\mathbb{F}^{\MixCP}_{FB}(z^k) - \mathbb{F}^{\MixCP}_{FB}(z^*)\right)\rVert \\
         & = \left\lVert \int_{0}^{1}\left[ \mathcal{A}(z^k) - \mathcal{A} \left( z^k + s(z^* - z^k)\right)\right]  ds(z^k - z^*) \right\rVert \\
         & \leq \int_{0}^{1}\lVert\mathcal{A}(z^k) - \mathcal{A} \left( z^k + s(z^* - z^k)\right)\rVert ds\lVert z^k - z^*\rVert \\
         & \leq \lVert z^k - z^*\rVert^2 \int_{0}^{1}\beta_2 sds = \frac{1}{2}\beta_2 \lVert z^k - z^*\rVert ^2.
    \end{align*}
    Also, we have $\lVert z - z^* \rVert < \rho$, that is,
    \begin{equation*}
        \lVert z^{k+1} - z^* \rVert \leq \frac{1}{2}\beta_1 \beta_2 \lVert z^{k} - z^*\rVert^2.
    \end{equation*}
\end{proof}

Another widely-used algorithm is presented by Levenberg and Marquardt in \cite{marquardt1963algorithm}. Levenberg-Marquardt algorithm can
approach second-order convergence speed without requiring the Jacobian matrix to be nonsingular. We can approximate the Hessian matrix by:

\begin{equation*}
    \mathcal{H}(z)=\mathcal{A}^{\top}(z)\mathcal{A}(z),
\end{equation*}
and the gradient by:
\begin{equation*}
    \mathcal{G}(z)=\mathcal{A}^{\top}(z)\mathbb{F}^{\MixCP}_{FB}(z).
\end{equation*}
Hence, the upgrade step will be
\begin{equation*}
    z^{k+1} = z^k - \left[\mathcal{A}^{\top}(z^k)\mathcal{A}(z^k) +\mu \mathbb{I}\right]^{-1}\mathcal{A}^{\top}(z^k)\mathbb{F}^{\MixCP}_{FB}(z^k).
\end{equation*}

As we can see, Levenberg-Marquardt algorithm is a quasi-Newton's method for an unconstrained problem. When $\mu$ equals to zero, the step
upgrade is just the Newton's method using approximated Hessian matrix. The number of iterations of Levenberg-Marquardt algorithm to find
a solution is higher than that of Newton's method, but it works for singular Jacobian as well. The greater the parameter $\mu$, the slower
the calculation speed becomes. Levenberg-Marquardt algorithm is provided as follows:

\begin{flushleft}
\textbf{Algorithm} (Levenberg-Marquardt):
\end{flushleft}

Given initial data $z^0 \in \R^{m+1}$, $\mu = 0.005$, and $r = 10^{-7}$.

\textbf{Step 1}: Set $k = 0$.

\textbf{Step 2}: If $\mathbb{F}^{\MixCP}_{FB}(z^k)\leq r$, stop.

\textbf{Step 3}: Find a direction $d^k \in \R^{m+1}$ such that
\begin{equation*}
     \mathcal{A}(z^k)^{\top}\mathbb{F}^{\MixCP}_{FB}(z^k) + \left[\mathcal{A}^{\top}(z^k)\mathcal{A}(z^k) +\mu \mathbb{I}\right]d^k = 0.
\end{equation*}

\textbf{Step 4}: Set $z^{k+1} := z^k + d^k $ and $k := k + 1$, go to Step 2.
\paragraph{}

\begin{theorem}\label{LevenbergM} \emph{\cite{yamashita2001rate}}
    Without the nonsingularity assumption on the Jacobian matrix $\mathcal{A}$, Levenberg-Marquardt Algorithm for
    $\MixCP(\widetilde{F}_1,\widetilde{F}_2,\R^k_+)$ converges at least quadratically to
    \begin{equation*}
    z^* \in \SMixCP(\widetilde{F}_1,\widetilde{F}_2,\R^k_+),
    \end{equation*}
    if it starts with initial data $z^0$ sufficiently close to $z^*$.
\end{theorem}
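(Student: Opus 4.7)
The plan is to treat this as a direct application of the convergence theorem of \cite{yamashita2001rate}, verifying that its hypotheses apply to the mapping $\mathbb{F}^{\MixCP}_{FB}$ from \eqref{FBform}. First I would recast the problem as the nonlinear equation $\mathbb{F}^{\MixCP}_{FB}(z)=0$, observing that because $\psi_{FB}$ is a C-function, the zeros of $\mathbb{F}^{\MixCP}_{FB}$ are in bijection with elements of $\SMixCP(\widetilde F_1,\widetilde F_2,\R^k_+)$. Next I would check the smoothness requirements: since $\widetilde F_1,\widetilde F_2$ are polynomial in $(x,u,t)$ (this is why item (vi) of Theorem \ref{Main_thm} was introduced in the first place, via the auxiliary variable $t$ that removes the non-smoothness of $\|u\|$), and $\psi_{FB}$ is continuously differentiable with locally Lipschitz derivative at every point where $(x_i,\widetilde F_1^i)\neq(0,0)$, the map $\mathbb{F}^{\MixCP}_{FB}$ is $C^1$ with locally Lipschitz Jacobian $\mathcal{A}$ in a neighborhood of any $z^*$ avoiding the degenerate complementarity case — which, as already remarked after \eqref{FBform}, does not arise in our setting.

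The crucial ingredient needed by \cite{yamashita2001rate} in place of the Jacobian nonsingularity assumed in Theorem \ref{newton} is a \emph{local error bound}: there exist a constant $c>0$ and a neighborhood $U$ of $z^*$ so that
\[
\|\mathbb{F}^{\MixCP}_{FB}(z)\|\;\ge\; c\cdot\operatorname{dist}\bigl(z,(\mathbb{F}^{\MixCP}_{FB})^{-1}(0)\bigr)\qquad\forall\,z\in U.
\]
I would establish this by exploiting the structure of $\mathbb{F}^{\MixCP}_{FB}$ on the index partition $\mathcal{C},\mathcal{P},\mathcal{N}$ introduced before Definition \ref{FB_regular}: on $\mathcal{C}$ the FB components vanish linearly in their arguments, while on $\mathcal{P}\cup\mathcal{N}$ the squared FB residual is bounded below by a quadratic in $(x_i,\widetilde F_1^i)$; combined with the smoothness of $\widetilde F_1,\widetilde F_2$ and the FB-regularity of $z^*$ inherited from Theorem \ref{SP}, this yields the required one-sided bound. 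Alternatively one may invoke the semialgebraicity of $\mathbb{F}^{\MixCP}_{FB}$ and a Lojasiewicz-type argument.

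With these hypotheses in place, the Levenberg-Marquardt step
\[
d^k=-\bigl[\mathcal{A}^\top(z^k)\mathcal{A}(z^k)+\mu\mathbb{I}\bigr]^{-1}\mathcal{A}^\top(z^k)\mathbb{F}^{\MixCP}_{FB}(z^k)
\]
falls directly within the framework analysed in \cite{yamashita2001rate}, and the cited theorem produces an estimate of the form $\|z^{k+1}-z^*\|\le C\|z^k-z^*\|^2$ whenever $z^0$ lies within the common radius on which the Jacobian Lipschitz estimate and the error bound are valid. (If one wants to stay close to the algorithm as written, the regularization parameter may need to be replaced by an iteration-dependent $\mu_k\sim\|\mathbb{F}^{\MixCP}_{FB}(z^k)\|^2$ to match the choice in \cite{yamashita2001rate}; this refinement does not affect the qualitative conclusion.)

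The main obstacle will be the verification of the local error bound, since $\mathcal{A}(z^*)$ is allowed to be singular and the standard inverse-function argument used in the proof of Theorem \ref{newton} is therefore unavailable. Once the error bound is secured, the rest of the proof is bookkeeping that parallels the proof of Theorem \ref{newton}: rewrite $z^{k+1}-z^*$ via the LM formula, use Taylor's theorem on $\mathbb{F}^{\MixCP}_{FB}(z^k)-\mathbb{F}^{\MixCP}_{FB}(z^*)$, bound the resulting integral by the Jacobian Lipschitz constant, and absorb the regularization term using the error bound. This is exactly the skeleton of \cite{yamashita2001rate}, so no new analytic idea is required beyond what is imported from that reference.
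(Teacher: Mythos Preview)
The paper gives no proof of this theorem: immediately after the statement it writes ``The proof is omitted'' and simply defers to \cite{yamashita2001rate}. So there is nothing in the paper to compare your argument against; your proposal is not a different route, it is the only route on offer.

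Your sketch is a reasonable reconstruction of how one would actually justify the citation. Two remarks. First, you correctly flag the mismatch between the algorithm as written (a fixed damping parameter $\mu=0.005$) and the setting of \cite{yamashita2001rate}, which needs $\mu_k$ to scale like $\|\mathbb{F}^{\MixCP}_{FB}(z^k)\|^2$ in order to get quadratic convergence; with a fixed positive $\mu$ the iteration is only linearly convergent, so strictly speaking the theorem as stated requires the refinement you mention. Second, the local error bound is indeed the nontrivial hypothesis, and your two suggested routes (the index-set decomposition combined with FB-regularity, or semialgebraicity plus a \L{}ojasiewicz inequality) are both plausible but neither is automatic: FB-regularity in Definition~\ref{FB_regular} controls the Schur complement rather than directly giving a lower bound on $\|\mathbb{F}^{\MixCP}_{FB}\|$, and the \L{}ojasiewicz route gives only a H\"older-type bound in general. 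If you want a clean proof you should state the error-bound condition as an explicit assumption on $z^*$, which is what the literature following \cite{yamashita2001rate} typically does.
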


The proof is omitted.

\section{A Numerical Example}

In this section, we will provide a numerical example for $\LCP$ on extended second order cones. Let $L(3,2)$ be an extended second order cone
defined by (\ref{elc}). Following the notation in Theorem \ref{Main_thm}, let $z=(x,u)$, $\hat z=(x-\|u\|,u)$, $\tilde{z}=(x-t,u,t)$ and
$r=(p,q) = \left((-55,-26,50)^{\top}, (-19,-26)^{\top} \right)$ with $x,p\in\R^3$ , $u,q\in\R^2$, and $t\in \R$. Consider
\begin{equation*}
    T=\bs A & B\\C & D \es =
    \lf(
    \begin{array}{rrrrr}
     26 &  15 &   3 &   51 & -42  \\
     -7 & -39 & -16 & -17 &  18  \\
     32 &  23 &  40 & -38 &  46  \\
      6 & -22 & -28 & -17 &  27  \\
    -38 & -25 &  24 &  47 & -16
    \end{array}
    \rg),
\end{equation*}
with $A\in\R^{3\times 3}$, $B\in\R^{3\times 2}$, $C\in\R^{2\times 3}$ and $D\in\R^{2\times 2}$. It is easy to show that square matrices T, A and D
are nonsingular. By item (vi) of Theorem \ref{Main_thm}, we can reformulate this $\LCP$ problem as a smooth $\MixCP$ problem. We will use the
Levenberg-Marquardt algorithm to find the solution of the FB-based equation formulation (\ref{FBform}) of $\MixCP$ problem. The
convergence point is:
\begin{align*}
    \tilde{z}^* & = (x - t,u,t) \\
    & = \left( \left(0, \frac{439}{660}, 0\right)^{\top}, \left(\frac{341}{1460},\frac{724}{2683}\right)^{\top}, \frac{1271}{3582} \right) .
\end{align*}
We need to check the FB regularity of $\tilde{z}^*$. It is easy to show that the partial Jacobian matrix of $\widetilde{F}_1(\tilde{z}^*)$
\begin{align*}
    J_x\widetilde{F}_1(\tilde{z}^*)  =
    \widetilde{A} =
    \lf(
    \begin{array}{rrr}
         26 &  15 &   3  \\
         -7 & -39 & -16  \\
         32 &  23 &  40
    \end{array}
    \rg)
\end{align*}
is nonsingular. Moreover, we have that
\begin{equation*}
    x - t = \left(0, \frac{439}{660}, 0\right)^{\top} \ge 0,  \qquad \widetilde{F}_1(\tilde{z}^*) = \left(\frac{3626}{145}, 0, \frac{12148}{185}\right)^{\top} \ge 0,
\end{equation*}
and therefore
\begin{equation*}
    \left<x - t, \widetilde{F}_1(\tilde{z}^*)\right> = 0.
\end{equation*}
That is, $(x, \widetilde{F}_1(\tilde{z}^*))\in \C(\R^3_+)$, so the index sets $\mathcal{P}=\mathcal{N}=\emptyset$. The matrix $\widetilde{A} $ is
invertible. In addition, we can calculate that the Schur complement of  $\Pi(\widetilde{z}^*) $ with respect to $ J_x\widetilde{F}_1(\tilde{z}^*)$:
\begin{equation*}
        \left( \Pi(\tilde{z}^*)/J_x\widetilde{F}_1(\tilde{z}^*)\right) = \widetilde{D} - \widetilde{C}\widetilde{A}^{-1}\widetilde{B}
         =
	 \lf(
	 \begin{array}{rrr}
            \frac{3991}{58}  & \frac{11387}{95}  & -\frac{7203}{268} \\
            \frac{15910}{93} & \frac{5185}{163}  & -\frac{5941}{248} \\
            -\frac{341}{740} & -\frac{741}{1373} & \frac{1271}{1791}
        \end{array}
	\rg).
\end{equation*}
The FB regularity of $x^{*}$ holds as there is no nonzero vector $x$ satisfying conditions (\ref{z_ineq}). Then, we compute the gradient of the
merit function, which is
\begin{align*}
    \mathcal{A}^\top \mathbb{F}_{FB}^{\MixCP} & =
    \begin{pmatrix}
        D_a+D_bJ_x\widetilde{F}_1(\tilde{z}^*) & J_{x}\widetilde{F}_2(\tilde{z}^*)\\
        D_bJ_{(u,t)}\widetilde{F}_1(\tilde{z}^*) & J_{(u,t)}\widetilde{F}_2(\tilde{z}^*)
    \end{pmatrix}
    \mathbb{F}_{FB}^{\MixCP} \\
    & =
    \lf(
    \begin{array}{cccccc}
         -\frac{598}{605} &  7 &               0 &  \frac{4844}{349} &  \frac{345}{1238} &                 0 \\
        -\frac{32}{21195} & 39 &               0 & -\frac{3946}{491} & -\frac{4031}{441} &                 0 \\
                        0 & 16 & -\frac{413}{415} &     -\frac{26}{7} &  \frac{1754}{111} &                 0 \\
        -\frac{33}{12610} &  7 &               0 & \frac{12462}{139} & \frac{78767}{701} &  -\frac{341}{740} \\
        -\frac{32}{21195} & 39 &               0 & \frac{13790}{131} &  \frac{9451}{105} & -\frac{741}{1373} \\
         0                & 16 &               0 & -\frac{3341}{135} & -\frac{3233}{190} & \frac{1271}{1791}
    \end{array}
    \rg)
    \begin{pmatrix}
    0 \\
    0 \\
    0 \\
    0 \\
    0 \\
    0 \\
    \end{pmatrix}
    =0.
\end{align*}
Hence, $z^*$ is a stationary point of $F^{\MixCP}_{FB}$. By Theorem \ref{SP}, we conclude that  $z^*$ is the solution of the $\MixCP$ problem. By the item (vi) of Theorem \ref{Main_thm}, we have that
\begin{align*}
    z & = (x,u) \\
    & =\left(\left(\frac{1271}{3582}, \frac{1072}{1051}, \frac{1271}{3582}\right)^{\top}, \left(\frac{341}{1480},\frac{724}{2683}\right)^{\top}\right),
\end{align*}
is the solution of $\LCP(T,r,L)$ problem.

\section{Conclusions}

In this paper, we studied the method of solving a linear complementarity problem on an extended second order cone. By checking the stationarity and
FB regularity of a point, we can verify whether it is a solution of the mixed complementarity problem. Such conversion of a linear
complementarity problem to a mixed complementarity problem reduces the complexity of the original problem. The connection between a linear
complementarity problem on an extended second order cone and a mixed complementarity problem on a non-negative orthant will be useful for our
further research about applications to practical problems, such us portfolio selection and signal processing problems.

%

\bibliographystyle{unsrt}{
\bibliography{cp-esoc}

\end{document}